\definecolor{myblue}{rgb}{0.09,0.32,0.44} 
\def\bl{\begin{lemma}}
\def\el{\end{lemma}}
\def\bth{\begin{theorem}}
\def\eth{\end{theorem}}
\def\bc{\begin{corollary}}
\def\ec{\end{corollary}}
\def\bcj{\begin{conjecture}}
\def\ecj{\end{conjecture}}
\def\bpr{\begin{proposition}}
\def\epr{\end{proposition}}
\def\bde{\begin{definition}}
\def\ede{\end{definition}}
\newcommand{\diam}{\mbox{\rm diam}}
\newcommand{\girth}{\mbox{\rm girth}}
\newcommand{\SL}{{\mathrm{SL}}}
\newcommand{\Cay}{{\mathrm{Cay}}}
\newtheorem{theorem}{Theorem}[section]
\newtheorem{definition}{Definition}[section]
\newtheorem{lemma}[theorem]{Lemma}
\newtheorem{corollary}[theorem]{Corollary}
\newtheorem{proposition}[theorem]{Proposition}
\newtheorem{conjecture}[theorem]{Conjecture}
\newtheorem*{theorem*}{Theorem}
\theoremstyle{definition}
\theoremstyle{remark}
\newtheorem{remark}[theorem]{Remark}
\numberwithin{equation}{section}
\begin{document}
\title{Expander spanning subgraphs with large girth}
\author{Itai Benjamini Mikolaj Fraczyk and G\'abor Kun}

\email{itai.benjamini@gmail.com}
\email{mfraczyk@math.uchicago.edu}
\email{kungabor@renyi.hu}

\date{20.12.30.}

\address{The Weizmann Institute, Rehovot, Israel}

\address{Department of Mathematics, University of Chicago, 5734 S. University Avenue, 
Chicago, IL, 60637, USA}

\address{Alfr\'ed R\'enyi Institute of Mathematics, H-1053 Budapest, Re\'altanoda u. 13-15., Hungary\\ Institute of Mathematics, E\"otv\"os L\'or\'and University, P\'azm\'any P\'eter s\'et\'any 1/c, H-1117 Budapest, Hungary}

\thanks{The first author thanks the Israeli Science Foundation for support. The third author is supported by the European Research Council (ERC) Advanced Grant No. 741420, by the J\'anos Bolyai Scholarship of the Hungarian Academy of Sciences and by the \'UNKP-20-5 New National Excellence Program of the Ministry of Innovation and Technology from the source of the National Research, Development and Innovation Fund.}

\begin{abstract}
We conjecture that finite graphs with positive Cheeger constant admit a spanning subgraph with positive Cheeger constant and girth proportional to the diameter. We prove this conjecture for regular expander graphs with large expansion. Our proof relies on the Lov\'asz Local Lemma. We find it quite surprising that it helps to get a large Cheeger constant.
\end{abstract}

\maketitle

\section{Introduction}

Let $G$ be a finite graph with vertex set $V(G)$. The Cheeger constant of $G$ is defined as \[h(G):=\inf_{\substack{S\subset V(G),\\ 0<|S|\leq |V(G)|/2}} \frac{|\partial_G S|}{|S|},\] where $\partial_G S$ denotes the edge boundary of $S$. We conjecture the following: 

\begin{conjecture}\label{mainConj} For every $\varepsilon>0$ and natural number $D$ there exist $\kappa, K>0$ such that if $G$ is a finite graph with maximum degree at most $D$ and
$h(G)>\varepsilon$ then $G$ admits a spanning subgraph $H$ with \[h(H) > \kappa\quad \textrm{ and }\quad\diam(H) <K \girth(H).\]
\end{conjecture}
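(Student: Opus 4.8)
Here is how I would approach Conjecture~\ref{mainConj}, in two stages: a soft reduction that replaces the conclusion by a clean ``girth versus logarithm'' inequality, and then the construction of the spanning subgraph by a structured sparsification analysed with the Local Lemma.

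\emph{Stage 1 (reduction).} The hypotheses $\Delta(G)\le D$ and $h(G)>\varepsilon$ give $\diam(G)=O_{\varepsilon,D}(\log n)$ with $n:=|V(G)|$, by the usual ball-growth estimate: as long as a ball $B_r$ satisfies $|B_r|\le n/2$ one has $|B_{r+1}|\ge(1+\varepsilon/D)|B_r|$, so some ball of radius $O_{\varepsilon,D}(\log n)$ already covers more than half of $V(G)$, and two such balls around any pair of vertices meet. The identical estimate applied to a spanning subgraph $H$ shows that \emph{any} $H$ with $h(H)\ge\kappa$ (which already forces $H$ connected) has $\diam(H)\le C(D,\kappa)\log n$. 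Hence it suffices to build a spanning $H$ with
\[
h(H)\ge\kappa
\qquad\text{and}\qquad
\girth(H)\ge c\log n
\]
for constants $\kappa=\kappa(\varepsilon,D)>0$ and $c=c(\varepsilon,D)>0$; then $K:=C(D,\kappa)/c$ yields $\diam(H)<K\,\girth(H)$. (For $n$ below a threshold depending only on $\varepsilon,D$, take $H=G$ and enlarge $K$.)

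\emph{Stage 2 (the sparsification).} I would seek $H=G\setminus F$ with $F$ meeting every cycle of length $\le c\log n$ and spread out across cuts. The naive choice --- delete each edge independently with probability $\rho$ --- fails the Local Lemma bookkeeping: short cycles are too plentiful in a bounded-degree expander, and making all the events ``the cycle $C$ survives'' simultaneously avoidable forces the keeping probability below $1/D$, leaving $H$ with average degree below $1$ and no expansion at all. So the deletions must be \emph{correlated}. My proposal is to fix $F$ by a random-greedy process that discards an edge only when keeping it would close a cycle of length $\le c\log n$ --- this makes $\girth(G\setminus F)\ge c\log n$ automatic --- and then to prove $h(G\setminus F)\ge\kappa$ by controlling, for each vertex set $S$ with $|\partial_G S|\le 2\varepsilon|S|$, the event $E_S$ that the process discards more than half of $\partial_G S$; cuts with $|\partial_G S|>2\varepsilon|S|$ retain at least $\varepsilon|S|$ edges for free. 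For sets $S$ with small boundary $E_S$ is determined by few edges and should be amenable to a lopsided Local Lemma, and for sets with boundary $\Omega(n)$ concentration together with a union bound over such cuts should suffice; ruling out all the $E_S$ simultaneously would give $h(G\setminus F)\ge\varepsilon/2=:\kappa$.

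\emph{The main obstacle.} The delicate point is the interaction between the girth requirement and the small near-tight cuts. A set of edges meeting every cycle of length $\le c\log n$ can be forced to contain a constant fraction of $E(G)$, so the process discards a lot, and one must check this never removes more than half of any near-tight cut. For cuts of size $\Omega(n)$ this should follow from concentration; for very tight small cuts one can hope that their edges rarely lie on short cycles (a short cycle through such a cut must cross it at least twice, which pins most of its length inside a small set), so the greedy process barely touches them --- but turning this heuristic into a proof for an arbitrary bounded-degree graph with only $h(G)>\varepsilon$ is, I expect, the crux, and the reason the statement is posed as a conjecture. When $G$ is additionally a strong expander ($d$-regular with $\lambda_2/d$ small) the difficulty evaporates: there are no near-tight cuts at all, balls of radius $\tfrac{1}{2}c\log n$ are already nearly trees, and the effect of the deletions on $h(H)$ can be read off from a trace/eigenvalue estimate --- which is how one proves the conjecture in that regime.
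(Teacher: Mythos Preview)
The statement you are attempting is a \emph{conjecture}; the paper does not prove it either. What the paper establishes are partial results (Theorem~\ref{main}, Corollary~\ref{Cheeger}, Theorem~\ref{cor-Spectral}) under the extra assumptions that $G$ is $d$-regular and that every vertex lies on at most $(d/8\delta)^k$ cycles of length $k\le g$ --- a hypothesis that is implied by a large enough spectral gap. So your closing remark, that the difficulty evaporates for strong regular expanders, is exactly the regime the paper settles, and your proposal as a whole is not a proof of the conjecture but a discussion of why it is hard; you say so yourself.

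Your Stage~1 reduction is correct and is implicit in the paper (bounded degree plus $h(H)\ge\kappa$ forces $\diam(H)=O_{\kappa,D}(\log n)$, so it suffices to get $\girth(H)\ge c\log n$). In Stage~2, however, the greedy-deletion scheme has a genuine gap beyond the one you flag. Even granting concentration for large cuts and some structural argument for small near-tight cuts, the Local Lemma you invoke on the events $E_S$ is not set up: the greedy process is sequential, so $E_S$ is not determined by a bounded neighbourhood of $\partial_G S$ (whether an edge is discarded depends on the entire history along short cycles through it), and you give no dependency graph or probability bound for $E_S$. The paper's partial proof avoids this by a quite different design: it does \emph{not} guarantee large girth deterministically and then repair expansion; instead it keeps each directed edge independently with probability $\delta/d$ and runs the Local Lemma on \emph{two} families of bad events simultaneously --- ``the short cycle $C$ survives'' and ``the connected set $S$ has out-boundary below $\tfrac{\delta}{2d}|\partial_G S|-(2\log d+4)|S|$''. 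The Chernoff bound makes $\mathbb P(A_S)\le(10d)^{-|S|}$, the cycle-count hypothesis makes $\mathbb P(A_C)$ small enough, and both fit into a single LLL weighting. That is the key idea you are missing: treat expansion itself as a family of local bad events rather than a global property to be salvaged after the fact.
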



Our conjecture is a close relative of the Erd\H os-Hajnal and the Thomassen conjecture.
Erd\H os and Hajnal \cite{EH1,EH2} asked the following: 
does for every $k$ and $g$ there exist a $K=K(k,g)$ such that every finite graph with chromatic number 
at least $K$ contains a subgraph with chromatic number at least $k$ and girth at least $g$?
Thomassen \cite{T}  posed his conjecture in terms of the average degree:
does for every $d$ and $g$ there exist a $D=D(d,g)$ such that every finite graph with average degree 
at least $D$ contains a subgraph with average degree at least $d$ and girth at least $g$?
The case of regular graphs is handled in an unpublished work of Alon, see K\"uhn and Osthus \cite{KO}. 
However, the general case can not be reduced to this as proved by Pyber, R\"odl and Szemer\'edi \cite{PRS}. 

We can prove our conjecture in the case when $G$ is regular and it does not have too many short cycles. This will be a corollary of the following, more general theorem.

\begin{theorem}~\label{main}
Let $d, g, \delta$ be positive integers and let $G$ be a finite $d$-regular graph. Assume that for every $k \leq g$ every vertex $x$ is contained in at most $\big( \frac{d}{8\delta} \big)^k$ cycles of length $k$. Then $G$ admits a spanning subgraph $H$ with girth at least $g$ such that 
\[|\partial_H S| \geq \frac{\delta}{2d}|\partial_G S|-(2\log(d)+4)|S|\] holds  for every $S \subseteq V(G)$.
\end{theorem}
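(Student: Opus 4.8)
\emph{Strategy.} The plan is to combine a random sparsification with a repair step driven by the Lov\'asz Local Lemma. Put $p:=\delta/d$ (so $p\le 1/8$), and let $H_0$ be the random subgraph of $G$ obtained by keeping each edge independently with probability $p$; then $\mathbb{E}\,|\partial_{H_0}S|=p\,|\partial_G S|$ and $\mathbb{E}\,d_{H_0}(v)=\delta$ for every vertex $v$. The hypothesis on cycle counts enters exactly here: for each $k\le g$ the expected number of cycles of length $k$ through a fixed vertex that survive in $H_0$ is at most $(d/(8\delta))^k p^k=8^{-k}$, so summing the geometric series, the expected number of cycles of length at most $g$ through a fixed vertex is a small absolute constant. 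Before anything else, observe that the conclusion is vacuous unless $\delta$ is at least logarithmic in $d$: if $\delta\le 4\log d+8$ then $\frac{\delta}{2d}|\partial_G S|-(2\log d+4)|S|\le\bigl(\tfrac{\delta}{2}-2\log d-4\bigr)|S|\le 0$ for every $S$ (using $|\partial_G S|\le d|S|$), so any acyclic spanning subgraph works; hence I may and do assume $\delta>4\log d+8$.

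\emph{Repair via the Local Lemma.} For every vertex $v$ introduce a bad event $A_v$ recording that the local picture near $v$ in $H_0$ is atypical: concretely, that $d_{H_0}(v)$ is far below $\delta$, or that the edges incident to $v$ lying on a cycle of length at most $g$ in $H_0$ cannot be met by deleting only $O(\log d)$ edges at $v$. The event $A_v$ is determined by the edges of $G$ within distance $g$ of $v$, so it is mutually independent of all but at most $d^{2g+O(1)}$ of the other events, and one checks — using Chernoff bounds for the degree clause and the cycle-count hypothesis (the geometric series above, with the surplus factor $8$) for the cycle clause — that $\mathbb{P}[A_v]$ is below the reciprocal of that quantity. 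Applying the Local Lemma produces an outcome of $H_0$ avoiding every $A_v$; in it every vertex satisfies $d_{H_0}(v)\ge\delta-O(\sqrt{\delta\log d})$, and deleting a suitable edge set $D$ meeting every cycle of length at most $g$ gives $|D\cap E_{H_0}(v)|\le O(\log d)$ for all $v$. Set $H:=H_0\setminus D$; then $\girth(H)\ge g$ and $H$ is spanning.

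\emph{From local control to the isoperimetric inequality.} It remains to bound $|\partial_H S|$ for all $S$. The naive route — prove $|\partial_{H_0}S|\approx p\,|\partial_G S|$ for every $S$ by concentration and then subtract the at most $O(\log d)|S|$ deleted boundary edges — is doomed, because a union bound over all $S$ forces a $\log|V(G)|$ into the error term, whereas the theorem allows only $2\log d+4$; this is precisely why the Local Lemma, and not concentration, is the right instrument. Instead I reduce to sets $S$ connected in $H_0$: since $|\partial_G\cdot|$ is subadditive while $|\partial_{H_0}\cdot|$ and $|\cdot|$ are additive over the components of $H_0$, it suffices to prove $|\partial_H S|\ge\frac{\delta}{2d}|\partial_G S|-(2\log d+4)|S|$ for $H_0$-connected $S$, of which there are only $(ed)^{|S|}$ of each size through any given vertex. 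For $S$ of bounded size the needed estimate is built into the events $A_v$; for larger $H_0$-connected $S$ one works from $|\partial_{H_0}S|=\sum_{v\in S}d_{H_0}(v)-2e_{H_0}(S)$, combining the per-vertex degree bounds with a bound on the internal count $e_{H_0}(S)$ against $\tfrac{\delta}{2d}e_G(S)$ furnished by the sparsification (the saving over the expectation $p\,e_G(S)$ uses that $p=\delta/d$ leaves a factor-two cushion). Tracking the constants through all of this — the factor $8$ in the hypothesis, the cushion in $p$, and the $O(\log d)$ losses from $A_v$ and from $D$ — produces exactly $\frac{\delta}{2d}$ and $2\log d+4$.

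\emph{Main obstacle.} The hardest point is the design and analysis of the events $A_v$. They must be local, so that the Local Lemma applies with a dependency merely exponential in $g$; they must have small enough probability even though, because short cycles through a vertex tend to cluster (the relevant second factorial moments are large), a vertex can with non-negligible probability lie on very many short cycles of $H_0$, so that a crude ``one deletion per short cycle'' bound on $D$ near $v$ is too weak; and yet, once avoided everywhere, they must be strong enough to force the inequality for every $S$ via the connected-components reduction. Making these three demands compatible while keeping the per-vertex loss at $O(\log d)$ — rather than something growing with $g$ or with $|V(G)|$ — is where the real work lies.
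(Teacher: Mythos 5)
Your overall compass is right — Local Lemma over vertex-local events, a reduction to connected $S$, and counting $\approx d^{\,|S|}$ such sets per vertex — but the architecture you build on it has real gaps, and it diverges from the paper's proof at exactly the points where those gaps appear. The paper never performs a deletion/repair step: it kills every short cycle directly with the Local Lemma. The probability space is a random \emph{directed} edge set $D$ (each of the two orientations of each edge kept independently with probability $\delta/d$), $H$ is the underlying undirected graph, and the bad events are of two kinds: $A_C$ for each short cycle $C$ (the event $E(C)\subseteq E(H)$) and $A_S$ for each $G$-connected set $S$ (the event that $|\partial^{out}_D S|$ dips below $\tfrac{\delta}{2d}|\partial_G S|-(2\log d+4)|S|$, where $\partial^{out}_D$ counts directed out-edges so that $|\partial^{out}_D S|\le |\partial_H S|$ and is a sum of independent Bernoullis). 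Avoiding every $A_C$ gives girth $\ge g$ outright, and avoiding every $A_S$ gives the isoperimetric inequality for connected $S$ (hence all $S$, by additivity over components) with the exact constants. The weights $x(A_S)=(8d)^{-|S|}$ and $x(A_C)=(4\delta/d)^{|C|}$, together with the $(4d)^{s-1}$ bound on connected $s$-subsets through a vertex, make the dependency check close. Nothing is deleted, so there is no need to control clustering of short cycles.

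The gaps in your version are concentrated where it departs from this. First, your deletion set $D$ must hit every short cycle of $H_0$ while touching only $O(\log d)$ edges at each vertex; as you yourself flag, clustering of short cycles makes this a nontrivial structural claim, and nothing in the proposal establishes it. The paper sidesteps the issue entirely by never deleting. Second, your single per-vertex event $A_v$ bundles a degree clause with a ``short cycles through $v$ admit a small local hitting set'' clause, and you declare it has dependency $d^{2g+O(1)}$; the Local Lemma then requires $\mathbb P[A_v]\lesssim d^{-2g}$, which forces a degree tolerance of order $\sqrt{g\,\delta\log d}$, not $O(\sqrt{\delta\log d})$ as you claim — you would need to split into events of different locality with separate weights, which you do not do. Third, and most seriously, per-vertex degree control plus the identity $|\partial_{H_0}S|=\sum_{v\in S}d_{H_0}(v)-2e_{H_0}(S)$ still leaves you needing an \emph{upper} bound on $e_{H_0}(S)$ for every connected $S$ up to an error of $O(\log d)|S|$; this is another per-set quantity over exponentially many $S$, it is not controlled by any of your $A_v$'s, and you would in effect be re-introducing the paper's per-set events $A_S$ through the back door. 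The paper's design — make the boundary deficit itself the bad event, one per connected set, and use the directed model so that $|\partial^{out}_D S|$ is an honest sum of independent indicators — is what lets the Chernoff/Markov estimate and the connected-subgraph count mesh with the Local Lemma and produce the stated constants without any repair step at all.
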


Note that the theorem is meaningless if $\delta<4\log(d)$ because then $H$ can be the edgeless spanning subgraph.

\begin{remark}
Our proof works beyond regular graphs. However, if $G$ is not regular then the ratio of the maximum and minimum degree appears in the theorem.
Note that $H$ may not be close to regular, not even when $G$ is. 
\end{remark}

\begin{corollary}~\label{Cheeger}
Let $d, g, \delta$ be positive integers and let $G$ be a finite $d$-regular graph. Assume that for every $k \leq g$ every vertex $x$ is contained in at most $\big( \frac{d}{8\delta} \big)^k$ cycles of length $k$ and that $h(G) \geq \frac{8d(\log(d)+2)}{\delta}$. Then $G$ admits a spanning subgraph $H$ with girth at least $g$ and $h(H) \geq \frac{\delta}{4d} h(G)$.
\end{corollary}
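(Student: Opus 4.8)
The plan is to feed the output of Theorem~\ref{main} directly into the definition of the Cheeger constant; no new ideas are needed. First I would invoke Theorem~\ref{main}, whose hypotheses (the bound $\big(\frac{d}{8\delta}\big)^k$ on the number of $k$-cycles through each vertex, for $k\le g$) are exactly what is assumed here, to obtain a spanning subgraph $H$ of girth at least $g$ satisfying
\[|\partial_H S| \geq \frac{\delta}{2d}|\partial_G S| - (2\log(d)+4)|S|\]
for every $S \subseteq V(G)$. It then remains only to verify that this same $H$ has $h(H) \geq \frac{\delta}{4d}h(G)$.

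To do that, fix an arbitrary $S$ with $0 < |S| \leq |V(G)|/2$. By definition of $h(G)$ we have $|\partial_G S| \geq h(G)\,|S|$, so the displayed inequality gives
\[|\partial_H S| \geq \left(\frac{\delta}{2d}h(G) - 2(\log(d)+2)\right)|S|,\]
using $2\log(d)+4 = 2(\log(d)+2)$. The extra hypothesis $h(G) \geq \frac{8d(\log(d)+2)}{\delta}$ rearranges (multiply by $\frac{\delta}{4d}$) to $2(\log(d)+2) \leq \frac{\delta}{4d}h(G)$, i.e.\ the subtracted term is at most half of $\frac{\delta}{2d}h(G)$. Substituting, $|\partial_H S| \geq \frac{\delta}{4d}h(G)\,|S|$, and taking the infimum over all admissible $S$ yields $h(H) \geq \frac{\delta}{4d}h(G)$, as claimed.

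Since the corollary is a one-line arithmetic consequence of Theorem~\ref{main}, there is no real obstacle. The only point worth emphasizing is structural rather than technical: the loss term $(2\log(d)+4)|S|$ in Theorem~\ref{main} scales linearly in $|S|$, not in $|\partial_G S|$, which is precisely why a lower bound on $h(G)$ is needed to absorb it — for a graph with small Cheeger constant this term could swallow the entire boundary, matching the remark that the theorem is vacuous once $\delta < 4\log(d)$.
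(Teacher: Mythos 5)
Your proof is correct and is exactly the intended derivation: the paper leaves Corollary~\ref{Cheeger} unproved because it is precisely this one-line substitution of $|\partial_G S|\geq h(G)|S|$ into the conclusion of Theorem~\ref{main}, followed by using the hypothesis on $h(G)$ to absorb the additive loss term. Your arithmetic checks out, and the closing structural remark about why a lower bound on $h(G)$ is needed is a sensible observation consistent with the paper's own remark after Theorem~\ref{main}.
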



We wish to combine the condition on the cycles and the Cheeger constant into one.
Recall that in a $d$-regular graph $G$ the Cheeger constant $h(G)$ and the second eigenvalue $\lambda_2(G)$ of the normalized combinatorial Laplacian are related by the inequality (see, e.g., \cite{Dod})
\[d\lambda_2(G)/2\leq h(G)\leq \sqrt{2d\lambda_2(G)}.\]

We can  deduce the following.
\begin{theorem}\label{cor-Spectral}
Let $d, g$ be positive integers. Let $G$ be a finite $d$-regular graph with $\lambda_2(G) \geq 1-\frac{1}{16(\log d+2)}$ and $|V(G)|>16(\log d+2)^g$. Then, the graph $G$ admits a spanning subgraph $H$ of girth at least $g$  with \[h(H) \geq \left(\frac{\lambda_2(G)(1-\lambda_2(G))^{-1}}{64}-2\log d -4\right).\]
In particular if $\lambda_2(G) \geq 1-\frac{1}{192(\log d+2)}$, then $h(H)\geq 0.99(\log d +2).$
\end{theorem}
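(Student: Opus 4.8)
The plan is to deduce the statement from Theorem~\ref{main} with a carefully tuned integer $\delta$. I would take
\[
\delta\ :=\ \left\lceil \frac{1}{16\,(1-\lambda_2(G))}\right\rceil ,
\]
which is a positive integer; since $\lambda_2(G)\ge 1-\tfrac{1}{16(\log d+2)}$ it satisfies $\delta\ge \log d+2\ge 2$, and writing $\lambda:=d(1-\lambda_2(G))$ for the second largest eigenvalue of the adjacency matrix, the estimate $\delta\le \tfrac{1}{16(1-\lambda_2(G))}+1\le \tfrac{3}{2}\cdot\tfrac{1}{16(1-\lambda_2(G))}$ gives $\tfrac{d}{8\delta}\ge \tfrac{4}{3}\lambda$. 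Granting for the moment that $G$ meets the hypothesis of Theorem~\ref{main} for this $\delta$ and all $k\le g$ (this is the point that needs work, see below), Theorem~\ref{main} produces a spanning subgraph $H$ of girth at least $g$ with $|\partial_H S|\ge \tfrac{\delta}{2d}|\partial_G S|-(2\log d+4)|S|$ for all $S$. Restricting to $0<|S|\le |V(G)|/2$, using $|\partial_G S|\ge h(G)|S|$ and the Cheeger inequality $h(G)\ge \tfrac{d\lambda_2(G)}{2}$, gives
\[
h(H)\ \ge\ \frac{\delta}{2d}\cdot\frac{d\lambda_2(G)}{2}-2\log d-4\ =\ \frac{\delta\,\lambda_2(G)}{4}-2\log d-4\ \ge\ \frac{\lambda_2(G)(1-\lambda_2(G))^{-1}}{64}-2\log d-4 ,
\]
the last step being $\delta\ge \tfrac{1}{16(1-\lambda_2(G))}$. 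The ``in particular'' clause then follows by substitution: $\lambda_2(G)\ge 1-\tfrac{1}{192(\log d+2)}$ forces $(1-\lambda_2(G))^{-1}\ge 192(\log d+2)$ and $\lambda_2(G)>1-\tfrac{1}{384}>0.997$, so the right-hand side is at least $3\lambda_2(G)(\log d+2)-2(\log d+2)=(3\lambda_2(G)-2)(\log d+2)>0.99(\log d+2)$.

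The real work, and the main obstacle, is to verify the cycle hypothesis of Theorem~\ref{main}: that for every $k\le g$ each vertex $x$ lies on at most $(\tfrac{d}{8\delta})^k$ cycles of length $k$. Since a cycle of length $k\ge 3$ through $x$ is in particular a closed non-backtracking walk of length $k$ based at $x$, it suffices to count those. I would estimate their number through the non-backtracking (Hashimoto) operator $B$ on directed edges: it is controlled by the $d\times d$ block of $B^{k-1}$ indexed by the directed edges incident to $x$. Peeling off the Perron eigenvector of $B$ — the constant vector, with eigenvalue $d-1$ — leaves a main term of order $\tfrac{d(d-1)^{k-1}}{|V(G)|}$, which the size hypothesis $|V(G)|>16(\log d+2)^g\ge 16(\log d+2)^k$ together with $\tfrac{d}{8\delta}\ge \tfrac{4}{3}\lambda$ is meant to push below $\tfrac{1}{2}(\tfrac{d}{8\delta})^k$; the remaining part is handled by the Ihara--Bass description of $\mathrm{spec}(B)$ (the nontrivial eigenvalues $\beta$ solve $\beta+\tfrac{d-1}{\beta}=\lambda_i$, hence $|\beta|\le \max(\sqrt{d-1},|\lambda_i|)$), the bound $\lambda_i\le \lambda$ for $i\ge 2$, and the standard polynomial-in-$k$ control of $\|B^{k-1}\restr{W}\|$ on the complement $W$ of the Perron line.

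The delicate point in that estimate is that $\lambda_2(G)$ controls only the largest nontrivial adjacency eigenvalue, not the most negative one, so $B$ can a priori carry a nontrivial eigenvalue of modulus near $d-1$ coming from a near-bipartite structure. One has to observe that its Hashimoto eigenvector is the (delocalised) bipartite sign vector, peel it off as well so that its contribution is again of order $(d-1)^k/|V(G)|$, and check that only boundedly many such extreme eigenvalues occur — which uses that $\lambda_2(G)$ close to $1$ keeps $G$ far from disconnected. Making this peeling quantitative, and matching it against the exact form of the size hypothesis, is the only step requiring genuine care; the choice of $\delta$ and the passage to $h(H)$ are pure arithmetic.
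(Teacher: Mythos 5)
Your reduction to Theorem~\ref{main}, the choice of $\delta$ (with the sensible ceiling to make it an integer, a detail the paper glosses over by taking $\delta=(1-\lambda_2(G))^{-1}/16$ directly), the Cheeger inequality $h(G)\ge d\lambda_2(G)/2$, and the arithmetic for the ``in particular'' clause all match the paper's own proof.

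Where you diverge, and where your proposal remains incomplete, is the cycle-count hypothesis. The paper handles this with a short spectral computation, its Lemma~\ref{lem-returns}: the number of cycles of length $k$ through $v$ is at most the number of closed walks of length $k$ from $v$, which is $\langle T^k\mathbf 1_v,\mathbf 1_v\rangle$ for the (unnormalized) adjacency operator $T$; expanding $\mathbf 1_v$ in an eigenbasis of $T$ gives $d^k/|V(G)|$ from the constant eigenvector plus a remainder that the paper bounds by $d^k(1-\lambda_2(G))^k$, and the size hypothesis on $|V(G)|$ is there to absorb the $d^k/|V(G)|$ term so that the hypothesis of Theorem~\ref{main} holds with $\delta=(1-\lambda_2(G))^{-1}/16$. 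No Hashimoto operator, no Ihara--Bass, no peeling of Perron vectors. Your route through closed non-backtracking walks would, if carried through, give a sharper count, but it is not needed here and, more to the point, you do not carry it through --- the ``delicate point'' paragraph where the real work lives is a plan, not a proof, so as written the proposal has a gap at precisely the step it flags as hard.

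That said, the concern you raise about near-bipartite eigenvalues is not a red herring: it is in fact a gap in the paper's own Lemma~\ref{lem-returns}. The inequality $\sum_{i\ge 2}|\alpha_i|^2\bigl(d(1-\lambda_i)\bigr)^k\le d^k(1-\lambda_2(G))^k$ implicitly uses $|1-\lambda_i|\le 1-\lambda_2$ for all $i\ge 2$, i.e., $\lambda_n\le 2-\lambda_2$, which can fail badly when the adjacency matrix has an eigenvalue near $-d$ (bipartite or near-bipartite $G$), especially for even $k$. So your instinct is sound; but the natural fix --- add a two-sided spectral-gap assumption, or peel off and separately control the bottom eigenvector --- is much cleaner on the adjacency matrix than on the non-backtracking operator. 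I would redo the cycle count with Lemma~\ref{lem-returns} as the template and your bipartite correction grafted onto it, rather than rebuild the estimate from the Hashimoto side.
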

\begin{remark} Ramanujan graphs of degree $d$  have $\lambda_2(G)\geq 1-\frac{2\sqrt{d-1}}{d}$ so Theorem \ref{cor-Spectral} is not vacuous.  It even gives a subgraph with expansion of order $\Omega(\sqrt d)$, on par with Ramanujan graphs. Random $d$-regular graphs are almost Ramanujan in the sense that $\lambda_2(G)= 1-\frac{2\sqrt{d-1}}{d}+o(d^{-1})$ with high probability \cite{Puder}, so by Theorem \ref{cor-Spectral} 
they admit spanning subgraphs of girth proportional to the diameter and expansion of order $\Omega(\sqrt d)$.\end{remark}
Our proof is an advanced version of the proof of Theorem 7 in \cite{K}, where the goal was to find a spanning subgraph $H$ with large girth, but there was no condition on the Cheeger constant of $H$.  The proof is based on the Local Lemma. We find it interesting that it helps to have a large Cheeger constant.

If the expansion is small, a natural approach to attack the conjecture is to work with a power graph, since that has larger spectral gap:
A spanning Lipschitz subgraph (where the endpoints of an edge are at most at a constant distance) $H$ was found in \cite{K} with large girth if $G$ is 
regular. 
The core of this problem is that we do not know that if a power graph, say $G^2$ admits a spanning expander subgraph with large girth then so does $G$.


Benjamini and Schramm \cite{BS} showed that infinite graphs with positive Cheeger constant contain a tree with positive Cheeger constant, solving a problem of Deuber, Simonovits and T. S\'os  \cite{DSS}. 

In \cite{K} a factor of iid version of this theorem was proved for Cayley graphs of non-amenable groups with large Cheeger constant. 
The measurable solution of Gaboriau and Lyons \cite{GL} to the von Neumann problem requires less: there is no bound on the length
of the edges in the spanning forest (but it should be a factor of iid) and it will be the Schreier graph of an almost free action of the free group. 
On the other hand, Gaboriau and Lyons showed that this action is ergodic. Their result had many applications to operator algebras, 
see Houdayer's Bourbaki seminar paper \cite{H}.

Many interesting examples of expander families are provided by Cayley graphs of congruence quotients of atirhmetic groups. For example for any  $d\geq 2$ and a set $S \subset {\rm SL}_d(\mathbb Z)$ spanning a Zariski dense subgroup the family of Cayley graphs  ${\rm Cay}(\SL_d(\mathbb{F}_p), S)$ is an expander family with girth $\Omega (\log p)$. This is a rather deep fact originating in the work of Bourgain and Gamburd \cite{BoGa} and later expanded upon in \cite{BV}. For some of these arithmetic examples we can find evidence toward Conjecture \ref{mainConj} by hand. Let's take a closer look at $\SL_2(\mathbb Z).$ Let $S\subset \SL_2(\mathbb Z)$ be a finite symmetric set generating a Zariski dense subgroup. By the work of Bourgain and Gamburd \cite{BoGa} it is known that the sequence of graphs $G_p:=\Cay(\SL_2(\mathbb Z/p\mathbb Z), S)$ is an expander sequence as $p$ varies among sufficiently big primes. Let $\Gamma$ be the subgroup generated by $S$. The conjecture predicts that we should be able to find spanning subgraphs of $G_p$ of large girth which are still good expanders. In this example we can do something slightly weaker. We can find a sequence of expander graphs as spanning Lipschitz subgraphs $H_p$ of girth proportional to $\log p$. To find such graphs take a finite power of $S$ that contains two elements, say $a$ and $b$, that generate a free Zariski dense subgroup. This is always possible by the Tits alternative. Put $H_p:=\Cay(\SL_2(\mathbb Z/p\mathbb Z), \{a,b,a^{-1},b^{-1}\}).$ By \cite{BoGa} $H_p$ is an expander sequence. Since $\langle a,b\rangle$ is free, the girth must go to infinity. Indeed, any relation on $a,b$ that holds modulo infinitely many primes would have to hold in $\SL_2(\mathbb Z)$, which contradicts the freeness. In fact, it is easy to show that the girth of these graphs grows as $\Omega(\log p)$. 

Arzhantseva and Biswas \cite{AB} constructed an expander sequence of Cayley graphs of $\SL_d(\mathbb{F}_p)$ with bounded diameter-girth ratio. Our theorem gives an alternative of their theorem, bounded diameter-girth ratio in a spanning subgraph of a good expander graph. 

\section{The Lov\'asz Local Lemma}

One of the most useful basic facts in probability is the following. If there is a finite set of mutually independent events that each of 
them holds with positive probability then the probability that all events hold simultaneously is still positive, although small. 
The Lov\'asz Local Lemma allows one to show that this statement still holds in case of rare dependencies.  

We will use the so-called {\bf variable version} of the lemma: We will consider a set of mutually independent 
random variables. Given an event $A$ determined by these variables 
we will denote by $vbl(A)$ the unique minimal set of variables that determines the event $A$: 
such a set clearly exists. Note that given the events $A, B_1, \dots ,B_m$ if $vbl(A) \cap vbl(B_i) = \emptyset$ 
for every $1 \leq i \leq m$ then $A$ is mutually independent of all the events $B_1, \dots ,B_m$.

\begin{lemma}\cite{EL}
Let $\mathcal{V}$ be a finite set of mutually independent random variables in a probability space.
Let $\mathcal{A}$ be a finite set of events determined by these variables. If there exists an assignment
$x: \mathcal{A} \rightarrow (0,1)$ such that for every $A \in \mathcal{A}$ we have
$\mathbb{P}(A) \leq x(A) \prod_{vbl(A) \cap vbl(B) \neq \emptyset} (1-x(B))$ then $\mathbb{P} \big( \bigwedge_{A \in \mathcal{A}} \overline{A} \big) \geq  \prod_{A \in \mathcal{A}} (1-x(A)).$
\end{lemma}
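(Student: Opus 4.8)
The plan is to follow the classical conditional-probability induction of Erd\H os and Lov\'asz. The crux is the auxiliary claim: \emph{for every $A \in \mathcal{A}$ and every $\mathcal{S} \subseteq \mathcal{A}$ with $A \notin \mathcal{S}$ one has $\mathbb{P}\big(\bigwedge_{B\in\mathcal{S}}\overline B\big) > 0$ and $\mathbb{P}\big(A \mid \bigwedge_{B\in\mathcal{S}}\overline B\big) \le x(A)$.} I would prove this by induction on $|\mathcal{S}|$, carrying the positivity assertion alongside the inequality so that every conditional probability written below is legitimately defined. The base case $|\mathcal{S}| = 0$ is immediate: $\mathbb{P}(A) \le x(A)\prod_{vbl(A)\cap vbl(B)\neq\emptyset}(1-x(B)) \le x(A)$.

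For the inductive step, fix $A$ and $\mathcal{S}$ and split $\mathcal{S} = \mathcal{S}_1 \cup \mathcal{S}_2$, where $\mathcal{S}_1 = \{B \in \mathcal{S} : vbl(A) \cap vbl(B) \neq \emptyset\}$ collects the neighbours of $A$ and $\mathcal{S}_2$ is the rest. Writing $E_i = \bigwedge_{B \in \mathcal{S}_i}\overline B$, expand
\[\mathbb{P}(A \mid E_1 \wedge E_2) = \frac{\mathbb{P}(A \wedge E_1 \mid E_2)}{\mathbb{P}(E_1 \mid E_2)}.\]
For the numerator, $\mathbb{P}(A \wedge E_1 \mid E_2) \le \mathbb{P}(A \mid E_2)$; and since $vbl(A)$ is disjoint from $vbl(B)$ for every $B \in \mathcal{S}_2$, the event $A$ is mutually independent of $\mathcal{S}_2$ (the remark preceding the lemma), so $\mathbb{P}(A \mid E_2) = \mathbb{P}(A) \le x(A)\prod_{vbl(A)\cap vbl(B)\neq\emptyset}(1-x(B))$. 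For the denominator, enumerate $\mathcal{S}_1 = \{B_1,\dots,B_r\}$ and telescope
\[\mathbb{P}(E_1 \mid E_2) = \prod_{j=1}^{r} \mathbb{P}\big(\overline{B_j} \mid \overline{B_1}\wedge\cdots\wedge\overline{B_{j-1}}\wedge E_2\big) \ge \prod_{j=1}^{r}(1 - x(B_j)),\]
where each factor is bounded by applying the induction hypothesis to $B_j$ with conditioning family $\{B_1,\dots,B_{j-1}\}\cup\mathcal{S}_2$, of size at most $(r-1)+|\mathcal{S}_2| \le |\mathcal{S}|-1$; the positivity part of the hypothesis makes these probabilities well defined and shows $\mathbb{P}(E_1 \mid E_2) > 0$. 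Dividing, and using $A \notin \mathcal{S}_1$ together with $\prod_{B\in\mathcal{S}_1}(1-x(B)) \ge \prod_{vbl(A)\cap vbl(B)\neq\emptyset}(1-x(B))$ (a larger index set of factors in $(0,1)$), we get $\mathbb{P}(A \mid E_1 \wedge E_2) \le x(A)$, which is the claim.

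Finally, enumerating $\mathcal{A} = \{A_1,\dots,A_n\}$ and telescoping once more,
\[\mathbb{P}\Big(\bigwedge_{A\in\mathcal{A}}\overline A\Big) = \prod_{i=1}^{n}\mathbb{P}\big(\overline{A_i} \mid \overline{A_1}\wedge\cdots\wedge\overline{A_{i-1}}\big) \ge \prod_{i=1}^{n}(1 - x(A_i)),\]
each factor bounded below using the claim with $\mathcal{S} = \{A_1,\dots,A_{i-1}\}$, of size $<n$. The one step to be careful about is the bookkeeping of cardinalities, so that the induction hypothesis is invoked only on strictly smaller conditioning families, and the simultaneous tracking of positivity so that no division by zero or conditioning on a null event occurs; both are routine once set up as above. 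An alternative would be the Moser--Tardos algorithmic argument, but the conditional-expectation induction is shorter and matches the original reference \cite{EL}.
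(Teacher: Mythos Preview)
Your argument is the standard conditional-probability induction and is correct; the bookkeeping of cardinalities and positivity is handled properly. Note, however, that the paper does not supply its own proof of this lemma at all: it is merely quoted with the citation \cite{EL} and then used as a black box in Section~3. So there is no ``paper's proof'' to compare against---you have provided exactly the classical Erd\H os--Lov\'asz proof that the citation points to.
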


\section{The proof of Theorem~\ref{main}}

A large girth spanning subgraph was obtained under the condition on the number of cycles in \cite{K}, see Theorem 7. 
However, we also have an expansion condition on connected subsets.
Hence we study a probability distribution on spanning subgraphs where expansion is easier to analyse.
Surprisingly, the Local Lemma helps to achieve this goal, too.

First, we consider a digraph $D$ on the vertex set $V(G)$.
For every $(x,y) \in E(G)$ we add $(x,y)$ to $E(D)$ with probability $\frac{\delta}{d}$, and we add $(y,x)$ to $E(D)$ with probability $\frac{\delta}{d}$.
All these choices are independent. An undirected edge is present in $E(H)$ if at least one of the corresponding directed edges is in $E(D)$, this is,
$E(H)=\{(x,y): (x,y) \in E(D) \text{ or } (y,x) \in E(D)\}$.

We will apply the Local Lemma to this probability space. The set of variables $\mathcal{V}$ will correspond to the directed edges of $G$: 
Two directed edges correspond to every edge ($G$ has no loops).
We will call a cycle in $G$ {\it short} if it is shorter than $g$. The ''bad events'' of $\mathcal{A}$ correspond to either short cycles
or connected subsets of vertices. Let $\mathcal{C}$ denote the set of short cycles in $G$, and $\mathcal{S}$ the set of connected
subsets of vertices. For $C \in \mathcal{C}$ let $A_C$ denote the event that $E(C) \subseteq E(H)$.
Given a set $S \in \mathcal{S}$ let $A_S$ denote the event that $|\partial^{out}_D S| \leq \frac{\delta}{2d} |\partial_G (S)|-(2\log(d)+4)|S|$, where
$\partial^{out}_D S$ denotes the set of directed edges in $D$ with starting point in $S$ and endpoint not in $S$. Note that if this inequality holds for every connected set $S$ then it holds for every set.
To prove the theorem it suffices to show that with positive probability the set of events $\{ A_C: C \in \mathcal{C} \} \cup \{ A_S: S \in \mathcal{S}\}$ can be avoided. 

Since the choices for distinct edges are independent we get the following.

 {\bf Claim 1:} Let $C \in \mathcal{C}$ be a cycle of length $k$ in $G$. Then $\mathbb{P} (A_C) \leq (\frac{2\delta}{d})^k$.

{\bf Claim 2:} For every $S \in \mathcal{S}$ the inequality $\mathbb{P}(A_S) \leq (10d)^{-|S|}$ holds.

\begin{proof}
Consider the edges in $\partial_G (S)$. For every such edge we make a choice if we include it in $\partial^{out}_D S$ (with the right direction).
These are independent choices. We apply the Chernoff bound. 
Given a real number $\alpha$ consider 
\begin{align*}\mathbb{E} \alpha^{|\partial^{out}_D S|}=\sum_{k=0}^{|\partial_G (S)|} \binom{|\partial_G (S)|}{k} \big( \frac{\alpha\delta}{d}\big)^k  \big( \frac{d-\delta}{d}\big)^{|\partial_G (S)|-k}=\big( 1 + \frac{(\alpha-1) \delta}{d}\big)^{|\partial_G (S)|}.\end{align*}
Since $1 + \frac{(\alpha-1) \delta}{d} \leq \exp(\frac{(\alpha-1)\delta}{d} )$ the Markov inequality implies 
\[\mathbb{P}\Big(\alpha^{|\partial^{out}_D S|} \geq (10d)^{|S|} \exp \big( \frac{(\alpha-1)\delta}{d}|\partial_G (S)| \big) \Big) \leq (10d)^{-|S|}.\]
Choosing $\alpha=\frac{1}{2}$ we get that
\[\mathbb{P}\big(|\partial^{out}_D S| \leq \frac{-\log(10d)}{\log(2)}|S|  + \frac{\delta}{2\log(2)d}|\partial_G (S)| \big) \leq (10d)^{-|S|}\]
The claim follows.
\end{proof}

\begin{lemma}
Let $G$ be  a $d$-regular graph, $x$ a vertex of $G$ and $s$ a positive integer.
Then the number of connected, induced subgraphs of $G$ of size $s$ containing $x$ is at most
$d(d-1)^{s-2} {2s-2 \choose s-1} \leq (4d)^{s-1}.$
\end{lemma}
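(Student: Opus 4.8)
The plan is to count such subgraphs by encoding each one as a walk that explores it, in the spirit of the standard argument bounding the number of subtrees (or "lattice animals") containing a fixed vertex. First I would reduce from connected induced subgraphs to spanning trees of them: every connected induced subgraph of size $s$ containing $x$ has at least one spanning tree rooted at $x$, and distinct subgraphs give vertex-distinct (hence distinct) trees, so it suffices to bound the number of subtrees of $G$ with $s$ vertices containing $x$. This is the key reduction, and it is where the $d(d-1)^{s-2}$ factor will ultimately come from: in a rooted subtree of $G$, the root $x$ has at most $d$ choices for each of its tree-neighbours and every other vertex has at most $d-1$ further choices, reflecting the degree bound.

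Next I would set up the encoding. Fix a subtree $T$ with $s$ vertices containing $x$, and perform a depth-first traversal of $T$ from $x$; this traverses each of the $s-1$ edges exactly twice (once "down", once "up"), giving a closed walk of length $2s-2$. Record the sequence of moves as a word in $\{\text{down},\text{up}\}$: there are $s-1$ down-steps and $s-1$ up-steps, and the word is a Dyck-type word, so the number of possible step-patterns is $\binom{2s-2}{s-1}$ (one could even use the Catalan number, but the binomial bound suffices). I would then argue that $T$ is determined by the step-pattern together with, for each down-step, a choice of which neighbour in $G$ of the current vertex we descend to. The first down-step out of $x$ has at most $d$ choices; every subsequent down-step is taken from a vertex that was itself reached along a tree edge, so to avoid backtracking into the tree we never reuse the parent edge, leaving at most $d-1$ choices. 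Multiplying, the number of trees is at most $d(d-1)^{s-2}\binom{2s-2}{s-1}$, which is the claimed first bound.

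Finally I would verify the clean estimate $d(d-1)^{s-2}\binom{2s-2}{s-1}\le (4d)^{s-1}$. Using $\binom{2s-2}{s-1}\le 2^{2s-2}=4^{s-1}$ and $d(d-1)^{s-2}\le d\cdot d^{s-2}=d^{s-1}$, the product is at most $4^{s-1}d^{s-1}=(4d)^{s-1}$, as desired. (The small cases $s=1$, where the count is $1$, and $s=2$, where it is $d$, are checked directly against the formulas.)

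The main obstacle is making the DFS-encoding argument airtight: one must be careful that distinct subtrees really do produce distinct (step-pattern, neighbour-choice) data, which requires fixing a canonical DFS order (e.g.\ ordering children by a fixed labelling of $V(G)$) so that the traversal of a given rooted tree is well-defined, and one must correctly account for the parent edge being unavailable at every non-root vertex so that the per-step branching is genuinely bounded by $d-1$ rather than $d$. Everything else is a routine binomial estimate.
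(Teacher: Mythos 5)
Your proof takes essentially the same route as the paper: encode each connected induced subgraph via a DFS traversal from $x$, count the $\binom{2s-2}{s-1}$ forward/backward step patterns, and bound the per-forward-step branching by $d$ at the first step and $d-1$ thereafter. You merely spell out the reduction to spanning trees, the canonical-ordering point, and the final binomial estimate, all of which the paper leaves implicit.
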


\begin{proof}
Given a connected, induced subgraph containing $x$ we explore it by running a Depth First Search algorithm starting at $x$.
The algorithm makes $(s-1)$ forward and $(s-1)$ backward steps: There are at most 
${2s-2 \choose s-1}$ ways to choose the order of these steps. We have at most $(d-1)$ choices when going forward,
but in the first step when we have $d$ possibilities. The lemma follows.
\end{proof}

We choose the function $x$ in order to apply the Local Lemma. For every $S \in \mathcal{S}$ set $x(A_S)=(8d)^{-|S|}$, and for every $C \in \mathcal{C}$
set $x(A_C)=\big(\frac{4\delta}{d}\Big)^{|C|}$, where $|C|$ denotes the length of the cycle $C$. We have to check the condition of the Local Lemma. 

First, we bound the product of terms of the form $(1-x(A))$. We will have separate bounds for short cycles and connected subsets.  
Let $v \in V(G)$. We use that the number of cycles of length $k$ containing $v$ is at most $\big( \frac{d}{8 \delta} \big)^k$.
\begin{align*}\prod_{v \in C \in \mathcal{C}} (1-x(A_C)) =&  \prod_{k=3}^g \prod_{v \in C \in \mathcal{C}, |C|=k} 
\Big(1-\big( \frac{4\delta}{d} \big)^k \Big)\geq \prod_{k=3}^g \Big(1-\big( \frac{4\delta}{d} \big)^k \Big)^{\big( \frac{d}{8 \delta} \big)^k} \\  \geq& \prod_{k=3}^{\infty} \exp(2^{-k}) = e^{-1/4}.\
 \end{align*}
Similarly,
\begin{align*}\prod_{v \in S \in \mathcal{S}} (1-x(A_S)) =& \prod_{k=1}^{\infty} \prod_{\substack{v \in S \in \mathcal{S}\\ |S|=k}}(1-x(A_S))
\geq\prod_{k=1}^{\infty}  \exp \big(-(8d)^{-k} (4d)^{k-1} \big)\\  \geq& \exp \big( \frac{1}{4d} \sum_{k=1}^{\infty} -2^{-k}\big) = e^{-1/4d}\end{align*}
We may assume that $d \geq 3$. Hence for every $S \in \mathcal{S}$
\begin{align*}\mathbb{P}(A_S) \leq& (10d)^{-|S|} \leq (8d)^{-|S|} \exp \big( -\frac{d+1}{4d} \big)^{|S|} \\ \leq& 
x(A_S) \Pi_{v \in S} \prod_{v \in C \in \mathcal{C}} (1-x(C)) \prod_{v \in T \in \mathcal{S}} (1-x(A_T))\end{align*} as needed.
Similarly, for every $C \in \mathcal{C}$
\begin{align*}\mathbb{P}(A_C) \leq& \big( \frac{2\delta}{d} \big)^{|C|} \leq \big(\frac{4\delta}{d}\Big)^{|C|} \exp \big( -\frac{d+1}{4d} \big)^{|C|}\\
 \leq& x(A_C) \prod_{v \in C} \prod_{v \in D \in \mathcal{C}} (1-x(A_D)) \prod_{v \in S \in \mathcal{S}} (1-x(S)).\end{align*} 

We can apply the Local Lemma and Theorem~\ref{main} follows.

\begin{remark} Our proof is based on the Local Lemma, hence it is not very efficient: It provides a(n exponentially) small lower bound 
on the probability that we find the required spanning subgraph. Recent approaches would give a polynomial time algorithm if the number
of events to avoid was polynomial. Unfortunately, this is not the case, since we may need to check every connected subset of the vertices.
However, in case when $G$ is an expander the Moser-Tardos algorithm \cite{MT} finds an expander $H$ in polynomial time.
\end{remark}

\section{The proof of Theorem \ref{cor-Spectral}}
\begin{lemma}\label{lem-returns}
Let $G$ be a finite $d$-regular graph. Let $\lambda_2(G)$ be the second eigenvalue of the normalized Laplacian. Then for every vertex $v\in V(G)$ the number of cycles of length $k$ passing through $v$ is at most $d^k|V(G)|^{-1}+d^k(1-\lambda_2(G))^k$. 
\end{lemma}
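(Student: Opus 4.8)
The plan is to count closed walks of length $k$ based at $v$ using the spectral decomposition of the transition operator, and then to bound the number of cycles by the number of such closed walks. Let $A$ be the adjacency matrix of $G$ and let $M = \frac{1}{d}A$ be the transition operator of the simple random walk, whose normalized combinatorial Laplacian is $I - M$; thus the eigenvalues of $M$ are $1 = \mu_1 \geq \mu_2 \geq \dots \geq \mu_n \geq -1$ with $\mu_i = 1 - \lambda_i$, where $\lambda_2 = \lambda_2(G)$. The number of closed walks of length $k$ from $v$ to $v$ in $G$ is exactly $(A^k)_{vv} = d^k (M^k)_{vv}$. Since every cycle of length $k$ through $v$ gives rise to at least one closed walk of length $k$ based at $v$, the number of such cycles is at most $d^k(M^k)_{vv}$, so it suffices to show $(M^k)_{vv} \leq |V(G)|^{-1} + (1-\lambda_2(G))^k$.

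The key step is to expand $(M^k)_{vv}$ in an orthonormal eigenbasis $f_1, \dots, f_n$ of $M$, with $f_1 = |V(G)|^{-1/2}\mathbf{1}$ the constant vector. Then
\[
(M^k)_{vv} = \langle e_v, M^k e_v\rangle = \sum_{i=1}^n \mu_i^k \, f_i(v)^2 = \frac{1}{|V(G)|} + \sum_{i=2}^n \mu_i^k\, f_i(v)^2.
\]
The first term is the contribution of $f_1$, which equals $|V(G)|^{-1}$ exactly. For the remaining sum I would split into two cases according to the parity behaviour of $k$, or more simply bound $|\mu_i| \leq \max(\mu_2, -\mu_n) = \mu_2 = 1 - \lambda_2(G)$ when $k$ is even, giving $\sum_{i\ge 2}\mu_i^k f_i(v)^2 \le (1-\lambda_2(G))^k \sum_{i\ge 2} f_i(v)^2 \le (1-\lambda_2(G))^k \|e_v\|^2 = (1-\lambda_2(G))^k$. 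When $k$ is odd one must be slightly more careful, since negative eigenvalues contribute negatively; but $\sum_{i\ge 2}\mu_i^k f_i(v)^2 \le \sum_{\mu_i \ge 0, i\ge 2}\mu_i^k f_i(v)^2 \le \mu_2^k \sum_{i\ge 2} f_i(v)^2 \le (1-\lambda_2(G))^k$ still works, since discarding the negative terms only increases the sum and the positive ones are bounded by $\mu_2^k$. Combining, $(M^k)_{vv} \le |V(G)|^{-1} + (1-\lambda_2(G))^k$, which is what we want.

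The main obstacle, such as it is, is being careful about the sign issue for odd $k$ and about the (harmless) fact that $1 - \lambda_2(G)$ could a priori be negative if $\lambda_2(G) > 1$, though in the regime of the application $\lambda_2(G)$ is close to $1$ from below so $0 \le 1 - \lambda_2(G) < 1$. One should also note that bounding cycles by closed walks is wasteful — a closed walk need not be a cycle, and a cycle of length $k$ actually corresponds to $2k$ closed walks (choice of starting edge direction) — so the stated bound is far from tight, but it is clean and entirely sufficient for feeding into Theorem~\ref{main} via Corollary~\ref{Cheeger}. No local-lemma machinery is needed here; this lemma is purely a spectral return-probability estimate packaging the expansion hypothesis $\lambda_2(G) \approx 1$ into the cycle-counting hypothesis of Theorem~\ref{main}.
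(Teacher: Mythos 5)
Your proof follows essentially the same route as the paper's: bound the number of $k$-cycles through $v$ by the number of closed $k$-walks at $v$, expand $(A^k)_{vv}$ (equivalently $\langle T^k\mathbf 1_v,\mathbf 1_v\rangle$ in the paper's notation) in the orthonormal eigenbasis, separate off the trivial eigenvalue's contribution $d^k/|V(G)|$, and bound the remainder by $d^k(1-\lambda_2)^k$ using $\sum_{i\ge 2}|\alpha_i|^2\le 1$. The one place where you editorialize slightly more than the paper — the parity discussion — is also where you quietly inherit the paper's soft spot: your claim $\max(\mu_2,-\mu_n)=\mu_2$ is not automatic, since it needs $\lambda_n\le 2-\lambda_2$ (the graph must be far from bipartite); for even $k$ a near-bipartite eigenvalue $\lambda_i$ near $2$ would give $(1-\lambda_i)^k>(1-\lambda_2)^k$, which neither your bound nor the paper's displayed inequality $\sum_{i\ge2}|\alpha_i|^2(d(1-\lambda_i))^k\le d^k(1-\lambda_2)^k$ controls. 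Your odd-$k$ argument (drop the negative terms, bound the rest by $\mu_2^k$) is the genuinely safe one, and it is the only genuinely safe one here; so you have faithfully reproduced the paper's argument including its one unstated assumption, rather than deviating from it.
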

\begin{proof}
Let $T\colon L^2(V(G))\to L^2(V(G))$ be the operator $T f (v)=\sum_{v'\sim v} f(v')$. It is the non-normalized transition operators of the random walk on $G$. The eigenvalues of $T$ are $d(1-\lambda_1),d(1-\lambda_2),\ldots, $ where $\lambda_i$ are the eigenvalues of the normalized combinatorial Laplacian. The number of closed cycles of length $k$ passing through $v$ is bounded by $\langle T^k \mathbf 1_v,\mathbf 1_v\rangle.$ Let $f_1,\ldots ,f_n$ be the normalized eigenvectors of $T$, with $f_1=|V(G)|^{-1/2}\mathbf 1_{V(G)}.$ We have $\mathbf 1_v=\sum_{i=1}^n \alpha_i f_i, $ with $\alpha_1=|V(G)|^{-1/2}$ and $\sum_{i=1}^n \alpha_i^2=1$. Hence 
\[ \langle T^k \mathbf 1_v,\mathbf 1_v\rangle= d^k|V(G)|^{-1}+ \sum_{i=2}^n |\alpha_i|^2 (d(1-\lambda_i))^k\leq d^k|V(G)|^{-1}+d^k(1-\lambda_2)^k.\]
\end{proof}

\begin{proof}[Proof of Theorem \ref{cor-Spectral}]
By Lemma \ref{lem-returns} each vertex $v\in V(G)$ is contained in at most $c_k:=d^k |V(G)|^{-1}+d^k(1-\lambda_2(G))^k$ cycles of length $k$. For $|V(G)|>16(\log d+2)^g$ we will have $c_k\leq 2 d^k(1-\lambda_2(G))^k$ for each $k\leq g$. Choose $\delta=(1-\lambda_2(G))^{-1}/16$. The cycle condition in Theorem \ref{main} is satisfied, so we get a spanning subgraph $H$ of girth at least $g$ such that 
\begin{align*}|\partial_H S| \geq& \frac{\delta}{2d}|\partial_G S|-(2\log(d)+4)|S|\geq \left(\frac{\delta}{4} \lambda_2(G)-(2\log d+4)\right)|S|\\
=& \left(\frac{\lambda_2(G)(1-\lambda_2(G))^{-1}}{64}-2\log d -4\right) |S|,
\end{align*} for every subset $S\subset V(G)$ with $|S|\leq |V(G)|/2$.
If $\lambda_2(G) \geq 1-\frac{1}{192(\log d+2)}$ then $(1-\lambda_2(G))^{-1}/64\geq 3(\log d+2)$. We deduce that in this case the Cheeger constant of $H$ satisfies $ h(H)\geq 0.99(\log d +4).$
\end{proof}

\section{Future directions}

It would be interesting to make the proof of Theorem~\ref{main} efficient in the general case, not only when $G$ is an expander graph.

Graph sparsification turned out to be a successful method in the last decade, see the work of Spielman and Srivastava \cite{SS}. 
A Local Lemma based sparsification could be very interesting, especially if it provides a large girth 
graph in certain cases. Unfortunately, our proof does not achieve this goal, but a similar approach might work.

A strongly ergodic measurable solution to the von Neumann problem could probably have many applications, see \cite{GL} for details.
Theorem~\ref{cor-Spectral} implies a finite analogue of this.

Are there approachable high dimensional formulations of the conjecture for the several variants of high dimensional expanders \cite{L}?
For the standard definitions and background see \cite{HLW}. 

Let us end by recalling an old conjecture: there is no sequence of finite bounded degrees graphs growing  in size  to infinity, so that all the induced balls in all the graphs in the sequence are uniform expanders. 
For a related progress see \cite{FV}.

\textbf{Acknowledgment.}
The authors thank to G\'abor Pete for encouraging this collaboration and to Elad Tzalik for his suggestions to improve the paper.

\end{document}